\providecommand{\U}[1]{\protect\rule{.1in}{.1in}}
\providecommand{\U}[1]{\protect\rule{.1in}{.1in}}
\providecommand{\U}[1]{\protect\rule{.1in}{.1in}}
\providecommand{\U}[1]{\protect\rule{.1in}{.1in}}
\providecommand{\U}[1]{\protect\rule{.1in}{.1in}}
\providecommand{\U}[1]{\protect\rule{.1in}{.1in}}
\providecommand{\U}[1]{\protect\rule{.1in}{.1in}}
\providecommand{\U}[1]{\protect\rule{.1in}{.1in}}
\providecommand{\U}[1]{\protect\rule{.1in}{.1in}}
\providecommand{\U}[1]{\protect\rule{.1in}{.1in}}
\providecommand{\U}[1]{\protect\rule{.1in}{.1in}}
\providecommand{\U}[1]{\protect\rule{.1in}{.1in}}
\providecommand{\U}[1]{\protect\rule{.1in}{.1in}}
\providecommand{\U}[1]{\protect\rule{.1in}{.1in}}
\providecommand{\U}[1]{\protect\rule{.1in}{.1in}}
\providecommand{\U}[1]{\protect\rule{.1in}{.1in}}
\providecommand{\U}[1]{\protect\rule{.1in}{.1in}}
\providecommand{\U}[1]{\protect\rule{.1in}{.1in}}
\providecommand{\U}[1]{\protect\rule{.1in}{.1in}}
\providecommand{\U}[1]{\protect\rule{.1in}{.1in}}
\providecommand{\U}[1]{\protect\rule{.1in}{.1in}}
\providecommand{\U}[1]{\protect\rule{.1in}{.1in}}
\providecommand{\U}[1]{\protect\rule{.1in}{.1in}}
\providecommand{\U}[1]{\protect\rule{.1in}{.1in}}
\providecommand{\U}[1]{\protect\rule{.1in}{.1in}}
\providecommand{\U}[1]{\protect\rule{.1in}{.1in}}
\providecommand{\U}[1]{\protect\rule{.1in}{.1in}}
\providecommand{\U}[1]{\protect\rule{.1in}{.1in}}
\providecommand{\U}[1]{\protect\rule{.1in}{.1in}}
\newtheorem{theorem}{Theorem}
{}
\newtheorem{case}{Case}
\newtheorem{corollary}{Corollary}
\newtheorem{example}{Example}
\newtheorem{lemma}{Lemma}
{}
\newtheorem{summary}{Summary}
\newenvironment{proof}[1][Proof]{\textbf{#1.} }{\ \rule{0.5em}{0.5em}}
\begin{document}

\title{On the Spectrality of the Differential Operators with Periodic Coefficients}
\author{O. A. Veliev\\{\small \ Dogus University, }\\{\small Esenkent 34755, \ Istanbul, Turkey.}\\\ {\small e-mail: oveliev@dogus.edu.tr}}
\date{}
\maketitle

\begin{abstract}
In this paper, we establish a condition on the coefficients of differential
operators generated in $L_{2}(-\infty,\infty)$ by an ordinary differential
expression with periodic, complex-valued coefficients, under which the
operator is a spectral operator.

Key Words: Non-self-adjoint differential operators, Periodic coefficients,
Spectral operators.

AMS Mathematics Subject Classification: 34L05, 34L20.

\end{abstract}

\section{\qquad Introduction and preliminary Facts}

Let $L$ be the differential operator generated in the space $L_{2}%
(-\infty,\infty)$ by the differential expression
\begin{equation}
(-i)^{n}y^{(n)}(x)+%
{\textstyle\sum\limits_{v=1}^{n}}
p_{v}(x)y^{(n-v)}(x), \tag{1}%
\end{equation}
where $n$ is an integer greater than $1$ and $p_{v}$, for $v=1,2,...n$, are
$1$-periodic complex-valued functions satisfying $\left(  p_{v}\right)
^{(n-v)}\in L_{2}\left[  0,1\right]  $. It is well-known that (see [3, 4, 6])
the spectrum $\sigma(L)$ of the operator $L$ is the union of the spectra
$\sigma(L_{t})$ of the operators $L_{t}$, for $t\in(-1,1]$, generated in
$L_{2}\left[  0,1\right]  $ by (1) and the boundary conditions
\begin{equation}
y^{(\mathbb{\nu})}\left(  1\right)  =e^{i\pi t}y^{(\mathbb{\nu})}\left(
0\right)  \tag{2}%
\end{equation}
for $\mathbb{\nu}=0,1,...,(n-1).$ The spectra $\sigma(L_{t})$ of the operators
$L_{t}$ consist of the eigenvalues called the Bloch eigenvalues of $L$.

This paper can be considered a continuation of [9], in which we established a
condition on the coefficients of differential operators generated by a
vectorial differential expression with periodic matrix coefficients, under
which the operator in question is asymptotically spectral. In particular, for
the scalar case, we proved that $L$ is an asymptotically spectral operator in
the following cases:

\begin{case}
$n$ is an odd number.
\end{case}

\begin{case}
$n$ is an even number and $\operatorname{Re}\int_{0}^{1}p_{1}(x)dx\neq0.$
\end{case}

In this paper, we obtain the following results.

\textbf{Result 1}. If $n$ is an odd number and
\begin{equation}
C\leq\ \pi^{2}2^{-n+1/2}, \tag{3}%
\end{equation}
then $L$ is a spectral operator, where
\[
C=%
{\textstyle\sum\limits_{v=2}^{n}}
{\textstyle\sum\limits_{s=0}^{n-v}}
\frac{\left(  n-v\right)  !\left\Vert \left(  p_{v}\right)  ^{(s)}\right\Vert
}{s!(n-v-s)!\pi^{v+s-2}}%
\]
and $\left\Vert \cdot\right\Vert $ is the $L_{2}\left[  0,1\right]  $-norm.

Note that, by the well-known substitution, expression (1) can be reduced to a
form in which $p_{1}$ is identically the zero function. Moreover, if $n$ is an
odd number, this substitution does not chance the behavior of $L.$ Therefore,
without loss of generality and to apply the results of [13] directly, we
assume in this case that \ $p_{1}$ is the zero function.

\textbf{Result 2. }If $n$ is an even number greater than $2$ and $p_{1}(x)=c$
for all $x$, where
\begin{equation}
c^{2}\geq\left(  \frac{1}{6}+\frac{2^{2n-4}}{\pi^{2}}\right)  C^{2}, \tag{4}%
\end{equation}
then $L$ is a spectral operator, where $c$ is a real nonzero constant and $C$
is defined in (3). In the case $n=2,$ the operator $T(c,q),$ generated in
$L_{2}(-\infty,\infty)$ by the expression $-y^{\prime\prime}+cy^{\prime}+qy,$
where $c$ is a nonzero real number and $q$ is a complex -valued, locally
square integrable, periodic function is a spectral operator if
\begin{equation}
\left\vert c\right\vert >\frac{1}{2}\left\Vert q\right\Vert . \tag{5}%
\end{equation}

Note that in [4], for $n>2,$ only the asymptotic spectrality of the operator
$L$ was investigated, whereas in this paper we consider the spectrality of
$L.$ The asymptotic spectrality considered in [4] and [9], using different
method, requires proving that the projections of $L$ corresponding to parts of
the spectrum lying in neighborhoods of infinity are uniformly bounded, whereas
spectrality requires that all spectral projections be uniformly bounded.
Therefore, in the case asymptotic spectrality, it is sufficient to analyze the
asymptotic formulas for the eigenvalues and eigenfunctions of $L_{t}$ for
$t\in(-1,1]$. However, in this paper, through a detailed investigation of all
Bloch eigenvalues, we obtain the above results concerning spectrality.
Moreover, in [4], for $n>2,$ the asymptotic spectrality of the operator $L$
was investigated by by imposing certain conditions on the distances between
the eigenvalues of $L_{t}$, while we prove the spectrality of $L$ by imposing
conditions solely on the $L_{2}\left[  0,1\right]  $-norm of the coefficients.

In the case $n=2,$ in [4], the spectrality of $T(c,q)$ is investigated by
imposing a condition on the supremum norm of $q$, which is applies only to
bounded function. In this paper, the spectrality of $L(c,q)$ is established by
imposing condition (5) solely on the $L_{2}\left[  0,1\right]  $-norm of $q,$
which is applicable to any locally square integrable, periodic function $q$.

\textbf{Result 1} is obtained in Section 2 by using the asymptotic spectrality
of $L$ proved in [9] along with some results from [13] on the localization of
all Bloch eigenvalues, which address only the case of odd order.

In Section 3, we consider the case where $n$ is an even number. The asymptotic
spectrality of $L$ established in [9] is also used in this case. However, the
investigations and methods for the localization of all Bloch eigenvalues
presented in [13] cannot be applied to the even-order case. Therefore, in
Section 3, we independently investigate all Bloch eigenvalues for even $n.$

It is important to note that when $n$ is even, the operator $L$ generated by
(1) is, in general, not a spectral operator. Furthermore, the smallness and
smoothness of the coefficients in (1) do not imply the spectrality of $L$, and
the condition on $p_{1}$ used in Case 2 is, in a certain sense, necessary. Let
us explain this for $n=2,$ that is, for the Schrodinger operator
$T(q):=T(0,q)$ generated by the expression $-y^{\prime\prime}+qy$ with complex
-valued potential $q.$ In [8], and [12, Sect. 3], we proved that if there
exists an associated function corresponding to some double Bloch eigenvalue,
then the projection about these eigenvalue are not uniformly bounded. Since
the existence of the associated functions is the widespread case for the
non-self-adjoint operator, in general, $T(q)$ is not a spectral operator.
Gesztezy and Tkachenko [2] proved two versions of a criterion for the operator
$T(q)$ with $q\in L_{2}[0,1]$ to be a spectral operator, in sense of Dunford
[1], one analytic and one geometric. The analytic version was stated in terms
of the solutions of the Hill equation. The geometric version of the criterion
uses algebraic and geometric\ properties of the spectra of the
periodic/antiperiodic and Dirichlet boundary value problems. The problem of
explicitly describing for which potentials $q$ the Schrodinger operators
$T(q)$ are spectral operators has remained open for about 65 years. In [7]
(see also [12, Sect. 2.7], I found explicit conditions on the potential $q$
such that $T(q)$ is an asymptotically spectral operator, using the asymptotic
formulas for the Bloch eigenvalues and Bloch functions. However, since these
asymptotic formulas do not provide any information about the existence of
associated functions corresponding to small eigenvalues, the method in [7]
does not yield any conditions for the spectrality of $T(q)$. Moreover, the
following well-known examples demonstrate that the spectrality of $T(q)$ is a
very rare phenomenon. Therefore, it is natural to conclude that finding
explicit conditions on $q$ that guarantee the spectrality of $T(q)$ is a
complex and generally ineffective problem.

\begin{example}
Consider the case $q(x)=ae^{i2\pi x}$ and $a\neq0.$ The numbers $\left(  \pi
n\right)  ^{2}$ for $n\neq0$ are the spectral singularities (see for example
[12, Sect. 3.3]) for all $n\in\mathbb{Z}$. It means that the operator $T(q)$,
in this case, is not a spectral operator.
\end{example}

\begin{example}
Consider the case $q(x)=ae^{i2\pi x}+be^{-i2\pi x}$ of Mathier-Schrodinger
operator. In [11, Theorem 1] (see also [12, Sect. 4.1]), I proved that the
operator $T(q)$ is an asymptotically spectral operator if and only if $\mid
a\mid=\mid b\mid$\ and
\[
\text{ }\inf_{q,p\in\mathbb{N}}\{\mid q\alpha-(2p-1)\mid\}\neq0,
\]
where $\alpha=\pi^{-1}\arg(ab)$, $\mathbb{N=}\left\{  1,2,...,\right\}  .$
Moreover, Theorem 1 of [11]implies that if $ab\in\mathbb{R}$, then $T(q)$ is a
spectral operator if and only it is self adjoint (see Corollary 1 of [11]).
Thus, in this case, there are no spectral operators that are not self-adjoint.
\end{example}

\begin{example}
Let $q$ be PT-symmetric periodic optical potential $4\cos^{2}x+4iV\sin2x.$
Then in [14] (see also [12, Sect. 5.4]) we prove that the operator $T(q)$ is a
spectral operator if and only if $V=0$, that is, $q(x)=2\cos2x$ and $T(q)$ is
a self-adjoint operator. Thus, in this important case as well, there are no
spectral operators that are not self-adjoint.
\end{example}

Thus, if $n$ is an even number and condition on $p_{1}$ used in Case 2 does
not holds, then one cannot find effective conditions on the coefficients of
(1) that guarantee the spectrality of $L,$ since this case is similar to its
subcase $T(q).$ However, the case where the operator $L$ is generated by the
differential expression (1) with even $n$, $p_{1}$ a real nonzero constant
$c,$ is similar to the odd case in the following sense. In both odd and even
cases, we consider the operator $L_{t}$ as a perturbations of the operators
$L_{t}(0)$ and $L_{t}(c),$ respectively, by the operator generated by the
expression
\begin{equation}
p_{2}(x)y^{(n-2)}(x)+p_{3}\left(  x\right)  y^{(n-3)}(x)+...+p_{n}(x)y \tag{6}%
\end{equation}
and boundary conditions (2), where $L_{t}(0)$ and $L_{t}(c)$ are associated by
the expression $(-i)^{n}y^{(n)}$ and $(-i)^{n}y^{(n)}+cy^{(n-1)},$
respectively. We say that these operators are the main part of $L_{t}$ in the
odd and even cases, respectively. The eigenvalues $\mu_{k}(t,c)$ of the main
part $L_{t}(c)$ are
\begin{equation}
\mu_{k}(t,c)=(2\pi k+\pi t)^{n}+c(2\pi ki+\pi ti)^{n-1} \tag{7}%
\end{equation}
for $k\in\mathbb{Z}.$ These eigenvalues are simple if $c\neq0,$ as the
eigenvalues $(2\pi k+\pi t)^{n}$ of $L_{t}(0)$ if $n$ is an odd number. But
despite this similarity, the examination of the eigenvalues {}{}of the $L_{t}$
operator in the odd and even cases is completely different.

\section{The case of odd order}

In this section, we consider the operators $L$ generated by (1), where $n$ is
an odd number greater that $1$ and $p_{1}$ is the zero function. We use the
following results of [13] and [9], formulated here as Summary 1 and Summary 2, respectively.

\begin{summary}
If $n$ is an odd integer greater than $1$ and (3) holds, then the eigenvalues
of $L_{t}$ lie on the disks
\[
U(k,t)=\left\{  \lambda\in\mathbb{C}:\left\vert \lambda-(2\pi k+\pi
t)^{n}\right\vert <\delta_{k}(t)\right\}
\]
for $k\in\mathbb{Z},$ where
\[
\delta_{k}(t):=\frac{3}{2}\pi^{n-2}C\left\vert (2k+t)\right\vert ^{n-2}.
\]
Moreover, each of these disks contains only one eigenvalue (counting
multiplicities) of $L_{t},$ and the closures of these disks are pairwise
disjoint closed disks
\end{summary}

Note that in [13], we considered differential operators generated by (1) when
coefficient of $y^{(n-v)}$ for $v=2,3,...,n$ was $(-i)^{n-v}p_{v},$ with
$p_{v}$ being a PT-symmetric function. However, the proof of the results in
Summary 1 for the case of this paper remains unchanged. Using this summary, we
obtain the following result.

\begin{theorem}
If $n$ is an odd integer greater than $1$ and (3) holds, then all eigenvalues
of $L_{t}$ for all $t\in(-1,1]$ are simple and there exists a function
$\lambda,$ analytic on $\mathbb{R},$ such that $\sigma(L)=\left\{
\lambda(t):t\in\mathbb{R}\right\}  .$
\end{theorem}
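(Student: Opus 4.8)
The plan is to exploit Summary~1 to show simplicity first, then build the analytic function $\lambda$ on $\mathbb{R}$ and finally identify $\sigma(L)$ with its range. The starting point is the localization provided by Summary~1: under hypothesis (3), for each fixed $t$ the eigenvalues of $L_t$ lie in pairwise disjoint disks $U(k,t)$, with exactly one eigenvalue (counting multiplicity) in each disk. Since each disk contains a single eigenvalue counted with multiplicity, the algebraic multiplicity of that eigenvalue is exactly one, which forces geometric simplicity as well; thus every eigenvalue of $L_t$ is simple for every $t\in(-1,1]$. This disposes of the first assertion almost immediately.

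Next I would construct the analytic function $\lambda$. For each fixed $k\in\mathbb{Z}$, the unique eigenvalue of $L_t$ inside $U(k,t)$ can be written via the contour integral $\lambda_k(t)=\tfrac{1}{2\pi i}\oint_{\partial U(k,t)}\zeta\,(L_t-\zeta)^{-1}\,d\zeta$ projected appropriately, or more cleanly as the unique root inside the disk of the characteristic (Hill-type) determinant $\Delta(\zeta,t)=\det\bigl(Y^{(\nu)}(1,\zeta)-e^{i\pi t}Y^{(\nu)}(0,\zeta)\bigr)$, where $Y$ is the fundamental solution matrix of (1). This determinant is entire in $\zeta$ and analytic (indeed entire, via $e^{i\pi t}$) in $t$; because $t$ enters only through $e^{i\pi t}$, the whole construction extends from $(-1,1]$ to all of $\mathbb{R}$ with period $2$, and the disks $U(k,t)$ vary continuously in $t$. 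Since the eigenvalue inside $U(k,t)$ is simple, $\partial_\zeta\Delta\neq 0$ there, so the implicit function theorem yields an analytic branch $\lambda_k(t)$ on each $t$-interval. The simplicity of each eigenvalue guarantees these branches never collide, so they persist globally; the key structural fact is that as $t$ traverses a period the branches permute cyclically, $\lambda_k(t+2)=\lambda_{k+1}(t)$ (matching the shift $2\pi k+\pi t\mapsto 2\pi(k+1)+\pi t$ in the leading term), which allows one to splice the countable family $\{\lambda_k\}$ into a single function $\lambda$ analytic on all of $\mathbb{R}$.

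With $\lambda$ in hand, the identity $\sigma(L)=\{\lambda(t):t\in\mathbb{R}\}$ follows from the Floquet decomposition recalled in the introduction, namely $\sigma(L)=\bigcup_{t\in(-1,1]}\sigma(L_t)$, together with the fact that $\bigcup_k\{\lambda_k(t):t\in(-1,1]\}$ is exactly the range of the spliced $\lambda$. One must check that no spectrum is lost or doubly counted: the disjointness of the disks in Summary~1 ensures the enumeration $k\mapsto\lambda_k(t)$ exhausts $\sigma(L_t)$ without overlap, and the cyclic matching at the endpoints $t=\pm1$ makes the spliced function single-valued and continuous there.

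I expect the main obstacle to be the global analytic continuation across the band-edge parameters $t=\pm 1$ and the verification that the separately defined branches $\lambda_k$ assemble into one genuinely analytic function rather than merely a continuous piecewise-analytic one. The delicate point is ruling out any branching: one must confirm that $\partial_\zeta\Delta(\lambda(t),t)\neq 0$ uniformly, including in the limit as $t\to\pm1$, so that the implicit function theorem applies with no degeneration, and that the reindexing $\lambda_k(\pm1)=\lambda_{k\mp1}(\mp1)$ matches not only values but all derivatives. Simplicity from Summary~1 is precisely what prevents the disks from touching and hence prevents any square-root-type branch points, so the whole argument hinges on propagating that simplicity to the global analyticity claim.
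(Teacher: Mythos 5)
Your proposal is correct and follows essentially the same route as the paper: simplicity comes directly from Summary 1, each $\lambda_k(t)$ is obtained as a simple root of the characteristic determinant $\Delta(\lambda,t)$ via the implicit function theorem (kept in $U(k,t)$ by the disjointness of the disks), and the branches are spliced across $t=1$ using the periodicity $\Delta(\lambda,t+2)=\Delta(\lambda,t)$ with the index shift $\lambda_k(t)\mapsto\lambda_{k+1}(t-2)$, yielding one analytic function whose range is $\sigma(L)$. The "delicate point" you flag at $t=\pm1$ is handled exactly as you suggest: since $t=1$ lies in $(-1,1]$, Summary 1 gives simplicity there, so the implicit function theorem applies at $t=1$ itself and automatically matches all derivatives of the two branches.
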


\begin{proof}
It follows from Summary 1 that, all eigenvalues of $L_{t}$ \ for all
$t\in(-1,1]$ are simple. Let us denote the eigenvalue of $L_{t}$ lying in
$U(k,t)$ by $\lambda_{k}(t).$ This eigenvalue is a simple root of the
characteristic equation $\Delta(\lambda,t)=0,$ where
\[
\Delta(\lambda,t)=\det(y_{j}^{(\nu-1)}(1,\lambda)-e^{it}y_{j}^{(\nu
-1)}(0,\lambda))_{j,\nu=1}^{n}=
\]%
\[
e^{in\pi t}+f_{1}(\lambda)e^{i(n-1)\pi t}+f_{2}(\lambda)e^{i(n-2)\pi
t}+...+f_{n-1}(\lambda)e^{i\pi t}+1,
\]
$y_{1}(x,\lambda),y_{2}(x,\lambda),\ldots,y_{n}(x,\lambda)$ are the solutions
of the equation
\[
(-i)^{n}y^{(n)}(x)+p_{2}\left(  x\right)  y^{(n-2)}(x)+p_{3}\left(  x\right)
y^{(n-3)}(x)+...+p_{n}(x)y=\lambda y(x)
\]
satisfying $y_{k}^{(j)}(0,\lambda)=0$ for $j\neq k-1$ and $y_{k}%
^{(k-1)}(0,\lambda)=1,$ and $f_{1}(\lambda),f_{2}(\lambda),...$ are the entire
functions (see [5, Chap. 1]). Let us prove that $\lambda_{k}(t)$ analytically
depend on $t$ in $(-1,1).$ Take any point $t_{0}$ from $(-1,1).$ By Summary 1,
$\lambda_{k}(t_{0})$ is a simple eigenvalue and hence a simple root of the
equation $\Delta(\lambda,t_{0})=0.$ By implicit function theorem, there exist
$\varepsilon>0$ and an analytic function $\lambda(t)$ on $(t_{0}%
-\varepsilon,t_{0}+\varepsilon)$ such that $\Delta(\lambda(t),t)=0$ for all
$t\in(t_{0}-\varepsilon,t_{0}+\varepsilon)$ and $\lambda(t_{0})=\lambda
_{k}(t_{0}).$ It mean that $\lambda(t)$ for $t\in(t_{0}-\varepsilon
,t_{0}+\varepsilon)$ is an eigenvalue of $L_{t}.$ Since the disk $U(k,t)$
continuously depends on $t$ and has no intersection point with the disks
$U(m,t)$ for $m\neq n,$ the number $\varepsilon$ can be chosen so that
$\lambda(t)\in U(k,t)$ for $t\in(t_{0}-\varepsilon,t_{0}+\varepsilon)$ and
hence $\lambda(t)=\lambda_{k}(t).$

Now let us consider the eigenvalue $\lambda_{k}(1).$ Arguing as above and
using the equalities $\Delta(\lambda,t+2)=\Delta(\lambda,t)$ and
$L_{t+2}=L_{t},$ we conclude that there exist $\varepsilon>0$ and an analytic
function $\lambda(t)$ on $(1-\varepsilon,1+\varepsilon)$ such that
$\Delta(\lambda(t),t)=0$ for $t\in(1-\varepsilon,1+\varepsilon)$ and the
following equalities hold: $\lambda(t)=\lambda_{k}(t)$ for $t\in
(1-\varepsilon,1]$ and $\lambda(t)=\lambda_{k+1}(t-2)$ for $t\in
(1,1+\varepsilon).$ Thus, $\lambda_{k+1}(t)$ is the analytic continuation of
$\lambda_{k}(t)$ for all $k\in\mathbb{Z}.$ Therefore, the function
$\lambda(t)$ defined by
\begin{equation}
\lambda(t)=\lambda_{k}(t-2k) \tag{8}%
\end{equation}
for $t\in(2k-1,2k+1]$ depends analytically on $t$ and maps $\mathbb{R}$ onto
$\sigma(L).$
\end{proof}

Now, using the following summary of [9], we consider the projections of
$L_{t}$ and spectrality of $L.$

\begin{summary}
In Case 1 and Case 2 (see the introduction and [9]), there exist positive
constants $N$ and $c(N)$ such that the eigenvalues $\lambda_{k}(t)$ of $L_{t}$
for $\left\vert k\right\vert >N$ are simple and
\begin{equation}
\parallel%
{\textstyle\sum\limits_{k\in J}}
\frac{1}{\alpha_{k}(t)}(f,\Psi_{k,t}^{\ast})\Psi_{k,t}\parallel^{2}\leq
c(N)\left\Vert f\right\Vert ^{2} \tag{9}%
\end{equation}
for all $f\in L_{2}(0,1),$ $t\in(-1,1]$ and $J\subset\left\{  k\in
\mathbb{Z}:\left\vert k\right\vert >N\right\}  ,$ where $\alpha_{k}%
(t)=(\Psi_{k,t},\Psi_{k,t}^{\ast}),$ $\Psi_{k,t}$ and $\Psi_{k,t}^{\ast}$ are
the normalized eigenfunctions of $L_{t}$ and $L_{t}^{\ast}$ corresponding to
the eigenvalues $\lambda_{k}(t)$ and $\overline{\lambda_{k}(t)},$ respectively.
\end{summary}

Let $\gamma$ be a closed contour lying in the resolvent set $\rho(L_{t})$ of
$L_{t}$ and enclosing only the eigenvalues $\lambda_{k_{1}}(t),$
$\lambda_{k_{2}}(t),...,\lambda_{k_{s}}(t)$. It is well-known that (see [5,
Chap. 1]) if these eigenvalues are simple and $e(t,\gamma)$ is the projection
defined by%
\[
e(t,\gamma)=\int_{\gamma}\left(  L_{t}-\lambda I\right)  ^{-1}d\lambda,\text{
}%
\]
then
\[
e(t,\gamma)f=%
{\textstyle\sum\limits_{j=1,2,...,s}}
\frac{1}{\alpha_{k_{j}}(t)}(f,\Psi_{k_{j},t}^{\ast})\Psi_{k_{j},t}.
\]
It is clear that
\begin{equation}
\left\Vert e(t,\gamma)\right\Vert \leq%
{\textstyle\sum\limits_{j=1,2,...,s}}
\frac{1}{\left\vert \alpha_{k_{j}}(t)\right\vert }. \tag{10}%
\end{equation}
In particular, if $\gamma$ encloses only $\lambda_{k}(t),$ where $\lambda
_{k}(t)$ is a simple eigenvalue, then
\begin{equation}
e(t,\gamma)=\frac{1}{\alpha_{k}(t)}(f,\Psi_{k,t}^{\ast})\Psi_{k,t}\text{ }%
\And\left\Vert e(t,\gamma)\right\Vert =\frac{1}{\left\vert \alpha
_{k}(t)\right\vert }. \tag{11}%
\end{equation}
Moreover, $\left\vert \alpha_{k}(t)\right\vert $ continuously depend on $t$
and $\alpha_{k}(t)\neq0$ (see Theorem 2.1 in [10]). Therefore, there exists a
positive constant $c_{k}$ such that%
\begin{equation}
\frac{1}{\left\vert \alpha_{k}(t)\right\vert }<c_{k} \tag{12}%
\end{equation}
for all $t\in(-1,1].$

Now using (9)-(12), we prove the following theorem about spectrality of $L.$

\begin{theorem}
If $n$ is an odd number greater than $1$ and (3) holds, then $L$ is a spectral operator.
\end{theorem}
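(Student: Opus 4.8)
The plan is to verify Dunford's definition of a spectral operator by exhibiting a bounded resolution of the identity for $L$, the only genuinely analytic point being the uniform boundedness of all spectral projections. The starting point is the Floquet (Gelfand) decomposition, under which $L$ is unitarily equivalent to the direct integral $\int_{(-1,1]}^{\oplus}L_{t}\,dt$; accordingly, a spectral projection $E(\Omega)$ of $L$ attached to a Borel set $\Omega\subset\mathbb{C}$ acts fiberwise as the Riesz projection $e(t,\gamma_{t})$ of $L_{t}$ onto those Bloch eigenvalues $\lambda_{k}(t)$ that fall into $\Omega$. By Theorem 1 and Summary 1 the eigenvalues $\lambda_{k}(t)$ are simple, depend analytically on $t$, and are separated by the pairwise disjoint disks $U(k,t)$, so for each $t$ a contour $\gamma_{t}\subset\rho(L_{t})$ enclosing exactly the indices $J_{\Omega}(t):=\{k:\lambda_{k}(t)\in\Omega\}$ can be drawn, and $\left\Vert E(\Omega)\right\Vert =\operatorname*{ess\,sup}_{t}\left\Vert e(t,\gamma_{t})\right\Vert $.

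First I would reduce the whole statement to the single uniform estimate
\[
\sup_{\Omega}\;\operatorname*{ess\,sup}_{t\in(-1,1]}\left\Vert e(t,\gamma_{t})\right\Vert <\infty ,
\]
since countable additivity, commutation with $L$, and the support condition $\sigma(L\mid_{E(\Omega)})\subset\overline{\Omega}$ all descend from the corresponding fiberwise properties of the discrete, simple-spectrum operators $L_{t}$ together with the measurable dependence of $t\mapsto\lambda_{k}(t)$ guaranteed by Theorem 1. To prove the estimate I would split $J_{\Omega}(t)=J_{1}\cup J_{2}$, with $J_{1}=J_{\Omega}(t)\cap\{\left\vert k\right\vert \le N\}$ and $J_{2}=J_{\Omega}(t)\cap\{\left\vert k\right\vert >N\}$, where $N$ is the constant from Summary 2, and write correspondingly $e(t,\gamma_{t})=e_{1}+e_{2}$. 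For the finite part $e_{1}$ there are at most $2N+1$ rank-one terms, each of norm $1/\left\vert \alpha_{k}(t)\right\vert $ by (11); invoking the uniform bound (12) together with the triangle-inequality estimate (10) gives $\left\Vert e_{1}\right\Vert \le\sum_{\left\vert k\right\vert \le N}c_{k}=:C_{1}$, a constant independent of $t$ and of $\Omega$. For the infinite part, since $J_{2}\subset\{\left\vert k\right\vert >N\}$, estimate (9) of Summary 2 applies verbatim with $J=J_{2}$ and yields $\left\Vert e_{2}f\right\Vert ^{2}\le c(N)\left\Vert f\right\Vert ^{2}$, whence $\left\Vert e_{2}\right\Vert \le\sqrt{c(N)}=:C_{2}$. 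Therefore $\left\Vert e(t,\gamma_{t})\right\Vert \le C_{1}+C_{2}$ uniformly in $t$ and $\Omega$, which bounds $\left\Vert E(\Omega)\right\Vert $ through the direct-integral norm formula, and $L$ is a spectral operator.

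The main obstacle is exactly this separation of scales. The triangle inequality (10) is hopelessly lossy for the infinitely many large indices, so one cannot bound $e_{2}$ term by term and must instead use the genuinely operator-theoretic Bessel-type inequality (9) produced by the asymptotic-spectrality analysis of [9]; conversely (9) is available only for $\left\vert k\right\vert >N$, so the finitely many small eigenvalues have to be controlled separately via (12), whose validity rests on $\alpha_{k}(t)\neq0$, i.e. on the absence of associated functions for these eigenvalues in the odd-order case. A secondary, more routine point is to confirm the measurable and countably additive dependence of the fiber projections on $\Omega$ and $t$; this is settled once the disjoint disks $U(k,t)$ of Summary 1 and the analytic branches $\lambda_{k}(t)$ of Theorem 1 are used to realize $E(\Omega)$ as an honest decomposable spectral measure.
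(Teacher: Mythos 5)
Your proposal is correct and follows essentially the same route as the paper: both split the eigenvalues enclosed by the contour into the finitely many indices with $\left\vert k\right\vert \le N$, bounded via (10)--(12), and the tail $J\subset\{\left\vert k\right\vert >N\}$, bounded via the Bessel-type inequality (9) of Summary 2, yielding a uniform bound on all Riesz projections $e(t,\gamma(t))$. The only difference is cosmetic: where you sketch the passage from uniform fiberwise projection bounds to spectrality by hand through the direct integral, the paper invokes the Riesz basis property of the eigenfunctions of $L_{t}$ together with Theorem 3.5 of [3], which packages exactly that step.
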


\begin{proof}
Let $\gamma(t)$ be a closed contour such that $\gamma(t)\subset\rho\left(
L_{t}\right)  $. It follows from Summary 1 and the definition of $\lambda
_{j}(t)$ that $\left\vert \lambda_{j}(t)\right\vert \rightarrow\infty$
uniformly on $(-1,1]$ as $\left\vert j\right\vert \rightarrow\infty.$
Therefore, there exist indices $k_{1},k_{2},\cdot\cdot\cdot,k_{s}$ from
$\left\{  k\in\mathbb{Z}:\left\vert k\right\vert \leq N\right\}  $ and set
$J\subset\left\{  k\in\mathbb{Z}:\left\vert k\right\vert >N\right\}  $ such
that the eigenvalues of $L_{t}$ lying inside $\gamma$ are $\lambda_{j}(t)$ for
$j\in\left(  \left\{  k_{1},k_{2},\cdot\cdot\cdot,k_{s}\right\}  \cup
J\right)  $, where $N$ is defined in Summary 2 and does not depend on $t$.
Then, we have
\[
e(t,\gamma(t))f=%
{\textstyle\sum\limits_{j=1,2,...,s}}
\frac{1}{\alpha_{k_{j}}(t)}(f,\Psi_{k_{j},t}^{\ast})\Psi_{k_{j},t}+%
{\textstyle\sum\limits_{k\in J}}
\frac{1}{\alpha_{k}(t)}(f,\Psi_{k,t}^{\ast})\Psi_{k,t}.
\]
Therefore, it follows from (9), (10), and (12) that, there exists a constant
$M$ such that
\begin{equation}
\left\Vert e(t,\gamma(t))\right\Vert <M \tag{13}%
\end{equation}
for all $t\in(-1,1]$ and $\gamma(t)\subset\rho\left(  L_{t}\right)  .$

Moreover, the system of root functions of $L_{t}$ forms a Riesz basis in
$L_{2}(0,1)$ for all $t\in(-1,1],$ and it follow from Summary 1 that, the
system of root functions is the system of eigenfunctions $\left\{  \Psi
_{k,t}(x):k\in\mathbb{Z}\text{ }\right\}  $; that is, the equality
\[
f=%
{\textstyle\sum\limits_{k\in\mathbb{Z}}}
\frac{1}{\alpha_{k}(t)}(f,\Psi_{k,t}^{\ast})\Psi_{k,t}%
\]
holds for all $f\in L_{2}[0,1]$ and $t\in(-1,1].$ This equality and (13) imply
that the proof of this theorem follows from Theorem 3.5 of [3].
\end{proof}

Now, using spectral expansion obtained in [9], we derive an elegant spectral
expansion for the operator $L$ assuming that $n$ is an odd integer greater
than $1$ and that (3) holds. Since all eigenvalues are simple, the operator
$L$ has no essential spectral singularities (ESS) and the equation (2.18) of
[9] takes the form
\begin{equation}
f(x)=\frac{1}{2}%
{\displaystyle\sum\limits_{k\in\mathbb{Z}}}
\int\limits_{(-1,1]}a_{k}(t)\Psi_{k,t}(x)dt \tag{14}%
\end{equation}
for $f\in L_{2}(-\infty,\infty),$ where $a_{k}(t)=%
{\textstyle\int\limits_{-\infty}^{\infty}}
\frac{1}{\alpha_{k}(t)}f(x)\overline{\Psi_{k,t}^{\ast}(x)}dx.$

\section{The case of even order}

In this section, we consider the operators $L$ generated by (1), where $n$ is
an even number and $p_{1}$ is a nonzero real constant $c$. One can see from
the proof of Theorem 2 that, to prove spectrality, we used Summary 2 and the
simplicity of all eigenvalues of $L_{t}$ for all $t\in(-1,1].$ Summary 2 holds
in the case of even order if $L_{t}$ is generated by (1) and condition on
$p_{1}$ used in Case 2 is satisfied. Since this condition holds when
$p_{1}(x)=c,$ where $c$ is a nonzero real number, we can apply Summary 2.
Therefore, it remains to prove that if $n$ is an even number greater than $1$
and condition (4) is satisfied, then all eigenvalues of $L_{t}$ are simple for
all $t\in(-1,1].$

In Section 2, to establish the simplicity of all eigenvalues, we used Summary
1, which holds only in the odd-order case. Moreover, the proof of Summary 1
does not carry over to the even-order case. That is why, we need to consider
the simplicity of all eigenvalues of $L_{t}.$ To this end, we investigate the
operator $L_{t}$\ as a perturbation of the operator $L_{t}(c)$ (defined in the
introduction), by the operator associated with expression (6). The eigenvalues
$\mu_{k}(t,c)$ of $L_{t}(c)$ are simple and defined by (7). Our goal to prove
that if (4) holds, then the eigenvalues of $L_{t}$ are also simple. For this,
we consider the family of operators
\[
L_{t,\varepsilon}:=L_{t}(c)+\varepsilon(L_{t}-L_{t}(c))
\]
and show that there exists a closed curve $\gamma_{k}$ enclosing only the
eigenvalue $\mu_{k}(t,c)$ which belongs to the resolvent set of
$L_{t,\varepsilon}$ for all $\varepsilon\in\lbrack0,1].$ Since $\gamma_{k}$
encloses only one eigenvalue (counting multiplicity) of $L_{t,0}=L_{t}(c),$ a
standard argument implies that there is exactly one eigenvalue (counting
multiplicity) of $L_{t}=L_{t,1}$ inside $\gamma_{k}$ and this is a simple eigenvalue.

Let $\lambda(k,t,\varepsilon)$ be an eigenvalue of the operator
$L_{t,\varepsilon}$ and let $\Psi_{\lambda(k,t,\varepsilon)}$ be a
corresponding normalized eigenfunction. For brevity, we sometimes write
$\Psi_{\lambda}$ and $\lambda$ instead of $\Psi_{\lambda(k,t,\varepsilon)}$
and $\lambda(k,t,\varepsilon),$ respectively. The normalized eigenfunction of
$L_{t}(c)$ corresponding to the eigenvalue $\mu_{k}(t,c)$ (see (7)) is
$e^{i\left(  2\pi k+\pi t\right)  x},$ where $k\in\mathbb{Z}$ and
$t\in(-1,1].$

From the equation $L_{t,\varepsilon}\Psi_{\lambda}=\lambda\Psi_{\lambda},$
using the obvious equality
\[
(L_{t}(c)\Psi_{\lambda},e^{i\left(  2\pi k+\pi t\right)  x})=((2\pi k+\pi
t)^{n}+c(2\pi ki+\pi ti)^{n-1})(\Psi_{\lambda},e^{i\left(  2\pi k+\pi
t\right)  x}),
\]
we obtain
\begin{equation}
\left(  \lambda-\left(  2\pi k+\pi t\right)  ^{n}-c(2\pi ki+\pi ti)^{n-1}%
\right)  \left(  \Psi_{\lambda},e^{i\left(  2\pi k+\pi t\right)  x}\right)
=\varepsilon\sum\limits_{\nu=2}^{n}(p_{v}\Psi_{\lambda}^{(n-v)},e^{i\left(
2\pi k+\pi t\right)  x}), \tag{15}%
\end{equation}
where $(\cdot,\cdot)$ denotes the inner product in $L_{2}\left[  0,1\right]
.$ Applying integration by parts, we get
\begin{equation}
\left\vert \varepsilon\sum\limits_{\nu=2}^{n}(p_{v}\Psi_{\lambda}%
^{(n-v)},e^{i\left(  2\pi k+\pi t\right)  x})\right\vert \leq CP(k,t),
\tag{16}%
\end{equation}
where
\[
P(k,t)=\left\{
\begin{array}
[c]{c}%
\left(  2\pi k+\pi t\right)  ^{n-2}\text{ if }k\neq0\text{ }\\
\pi^{n-2}\text{ if }k=0\text{ }%
\end{array}
\right.  .
\]
This inequality, together with (15), implies that
\begin{equation}
\left\vert \left(  \Psi_{\lambda},e^{i\left(  2\pi k+\pi t\right)  x}\right)
\right\vert \leq\frac{CP(k,t)}{\left\vert \lambda-\left(  2\pi k+\pi t\right)
^{n}-c(2\pi ki+\pi ti)^{n-1}\right\vert }. \tag{17}%
\end{equation}

Now, using (17), we prove the following lengthy technical lemma for the case
$n>2.$ The case $n=2$ is very simple and will be discussed at the end of the paper.

\begin{lemma}
If $n$ is an even number greater that $2$ and condition (4) is satisfied, then
the horizontal lines
\[
H(n,t,s)=\left\{  (x,y)\in\mathbb{R}^{2}:y=ci^{n-2}((2s+1)\pi+\pi
t)^{n-1}\right\}
\]
for $s=0,\pm1,\pm2,...,$ belong to the resolvent set of $L_{t,\varepsilon}$
for all $\varepsilon\in\lbrack0,1]$ and $t\in\lbrack0,1].$
\end{lemma}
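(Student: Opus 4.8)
The plan is to argue by contradiction, exploiting that each line $H(n,t,s)$ sits exactly at the ``half-integer'' imaginary level lying strictly between two consecutive unperturbed eigenvalues $\mu_{k}(t,c)$, and combining this with the pointwise estimate (17) and Parseval's identity for the orthonormal basis $\{e^{i(2\pi k+\pi t)x}\}_{k\in\mathbb{Z}}$ of $L_{2}[0,1]$. Since $L_{t,\varepsilon}$ is a regular boundary value problem on $[0,1]$ its spectrum is discrete, so it suffices to show that no $\lambda\in H(n,t,s)$ is an eigenvalue. Suppose, then, that some $\lambda\in H(n,t,s)$ is an eigenvalue of $L_{t,\varepsilon}$ with normalized eigenfunction $\Psi_{\lambda}$, for some $\varepsilon\in[0,1]$ and $t\in[0,1]$. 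Writing $a_{k}=2\pi k+\pi t$, the crucial observation is that $\mathrm{Im}\,\mu_{k}(t,c)=c\,i^{n-2}a_{k}^{n-1}$ (the term $a_{k}^{n}$ is real and $c\,i^{n-2}$ is a real number, since $n$ is even), while by definition $\mathrm{Im}\,\lambda=c\,i^{n-2}((2s+1)\pi+\pi t)^{n-1}$. As $n-1$ is odd, $u\mapsto u^{n-1}$ is strictly increasing, and the arguments $2k+t$ and $2s+1+t$ differ by the odd integer $|2(s-k)+1|\ge 1$; hence $\mathrm{Im}\,\lambda\ne\mathrm{Im}\,\mu_{k}(t,c)$ for every $k$. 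This gives the $x$-independent lower bound
\[
|\lambda-\mu_{k}(t,c)|\ \ge\ |\mathrm{Im}(\lambda-\mu_{k}(t,c))|=|c|\,\pi^{n-1}\bigl|(2s+1+t)^{n-1}-(2k+t)^{n-1}\bigr|,
\]
valid uniformly along the whole line $H(n,t,s)$ and for all $\varepsilon\in[0,1]$.

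Next I would combine (17) with Parseval: since $\Psi_{\lambda}$ is normalized and $\{e^{ia_{k}x}\}$ is orthonormal,
\[
1=\|\Psi_{\lambda}\|^{2}=\sum_{k\in\mathbb{Z}}\bigl|(\Psi_{\lambda},e^{ia_{k}x})\bigr|^{2}\le\sum_{k\in\mathbb{Z}}\frac{C^{2}P(k,t)^{2}}{|\lambda-\mu_{k}(t,c)|^{2}}.
\]
Substituting the lower bound above, together with $P(k,t)=\pi^{n-2}(2k+t)^{n-2}$ for $k\ne0$ and $P(0,t)=\pi^{n-2}$ (here $n-2$ is even, so these are nonnegative), the powers of $\pi$ cancel and this reduces to $1\le\frac{C^{2}}{c^{2}\pi^{2}}\,S(t,s)$, where
\[
S(t,s):=\sum_{k\in\mathbb{Z}}\frac{(2k+t)^{2(n-2)}}{\bigl((2s+1+t)^{n-1}-(2k+t)^{n-1}\bigr)^{2}},
\]
with the $k=0$ term understood with numerator $1$. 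To reach a contradiction it then suffices to prove the deterministic estimate $S(t,s)\le\frac{\pi^{2}}{6}+2^{2n-4}$ uniformly in $s\in\mathbb{Z}$ and $t\in[0,1]$: indeed condition (4) reads $c^{2}\ge(\frac16+\frac{2^{2n-4}}{\pi^{2}})C^{2}$, so that $\frac{C^{2}}{c^{2}\pi^{2}}S(t,s)\le\frac{C^{2}}{c^{2}}(\frac16+\frac{2^{2n-4}}{\pi^{2}})\le 1$. The final contradiction $1<1$ then comes from the fact that the pointwise bound (16), obtained from integration by parts and Cauchy--Schwarz, is not an equality (and is strict when $\varepsilon<1$), so the first inequality in the displayed chain is strict.

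The main obstacle is precisely the uniform estimate $S(t,s)\le\frac{\pi^{2}}{6}+2^{2n-4}$, and the two constants on the right correspond to the two natural pieces of the sum. I would split $S(t,s)$ into the two resonant terms $k=s$ and $k=s+1$, for which $2k+t$ is nearest to $2s+1+t$ (at distance $1$), and the remaining far terms. For the far terms I would factor
\[
(2s+1+t)^{n-1}-(2k+t)^{n-1}=\bigl((2s+1+t)-(2k+t)\bigr)\sum_{j=0}^{n-2}(2s+1+t)^{j}(2k+t)^{\,n-2-j},
\]
and, using $|2(s-k)+1|\ge3$ together with the monotonicity of $u^{n-1}$, dominate each far term by a constant multiple of $(2(s-k)+1)^{-2}$; summing this convergent series against $\sum_{m\ge1}m^{-2}=\pi^{2}/6$ yields the $\frac{\pi^{2}}{6}$ contribution. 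For the two resonant terms, where the denominator is smallest, a direct estimate of $\frac{(2k+t)^{2(n-2)}}{((2s+1+t)^{n-1}-(2k+t)^{n-1})^{2}}$ using $t\in[0,1]$ produces the bound $2^{2n-4}$.

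The delicate point, and where I expect the real work to lie, is uniformity in small $|s|$ (in particular $s=0$), where $2s+1+t$ is itself of order one and the resonant numerator cannot be absorbed by a large denominator; this is exactly the regime in which the special definition $P(0,t)=\pi^{n-2}$ (replacing a potentially small $(2\cdot0+t)^{n-2}$ by the uniform value) is essential to keep the $k=0$ contribution under control and to make the constant $2^{2n-4}$ genuinely uniform. Verifying that the resonant and tail pieces add up to at most $\frac{\pi^{2}}{6}+2^{2n-4}$ in every case, uniformly in $s$ and $t$, is the technical heart of the argument.
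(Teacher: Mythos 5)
Your skeleton is exactly the paper's: assume $\lambda\in H(n,t,s)$ is an eigenvalue, bound the denominator in (17) from below by the gap between imaginary parts, $|\lambda-\mu_k(t,c)|\ge|c|\,\bigl|((2s+1)\pi+\pi t)^{n-1}-(2\pi k+\pi t)^{n-1}\bigr|$ (this is the paper's inequality (18)), and contradict Parseval for $\{e^{i(2\pi k+\pi t)x}\}$. You also correctly identify the special role of $P(0,t)=\pi^{n-2}$ and the fact that the constant $2^{2n-4}$ is produced by the $k=0$ term when the line sits between $\mu_{-1}$ and $\mu_0$ (i.e.\ $s=-1$, minimized at $t=1/2$). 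However, the entire content of the paper's proof is the summation estimate you defer to ``the technical heart,'' and the budget you propose for it does not close as stated. First, your two resonant terms $k=s$ and $k=s+1$ do not jointly obey the bound $2^{2n-4}$: for $s=-1$ the $k=0$ term alone attains $2^{2n-4}$ (at $t=1/2$), while the $k=-1$ term, $\frac{(2-t)^{2n-4}}{((2-t)^{n-1}-(1-t)^{n-1})^{2}}$, tends to $1$ as $t\to1$; so the resonant pair can contribute up to $2^{2n-4}+1$, leaving only $\pi^{2}/6-1\approx0.645$ for the far terms. Second, your far-term bound fails to deliver constant $1$ when $2s+1+t$ and $2k+t$ have opposite signs: the factorization gives
\[
\Bigl|\sum_{j=0}^{n-2}(2s+1+t)^{j}(2k+t)^{n-2-j}\Bigr|=\frac{|2s+1+t|^{n-1}+|2k+t|^{n-1}}{|2s+1+t|+|2k+t|},
\]
which dominates $|2k+t|^{n-2}$ only when $|2s+1+t|\ge|2k+t|$; in the opposite regime you lose a factor that pushes the far sum above the remaining allowance. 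Concretely, for $s=-1$ the terms $k\ge1$ handled this way already overshoot.

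The fix is the one the paper uses: treat same-sign and opposite-sign indices by \emph{different} denominator bounds. For same-sign pairs use $|a^{n-1}-b^{n-1}|\ge|a-b|\,|a|^{n-2}$ with $a=2\pi k+\pi t$, so those terms (including the adjacent resonant one at distance $1$, which is charged here, not to the $2^{2n-4}$ budget) sum against $\sum_{m\ge1}(2m-1)^{-2}=\pi^{2}/8$; for opposite-sign pairs drop the smaller power entirely, $|a^{n-1}-b^{n-1}|\ge|2\pi k+\pi t|^{n-1}$, so that the $(2\pi k+\pi t)^{n-2}$ in the numerator cancels and those terms sum against $\sum_{m\ge1}(2m)^{-2}=\pi^{2}/24$. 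Since $\pi^{2}/8+\pi^{2}/24=\pi^{2}/6$, this is precisely where the $\tfrac16$ in condition (4) comes from, with $k=0$ alone accounting for $2^{2n-4}/\pi^{2}$; the paper then verifies the three regimes $s\ge0$, $s=-1$, $s<-1$ separately and takes the maximum of the resulting constants, the binding case being $s=-1$. Without this sign-dependent case analysis your reduction to $S(t,s)\le\pi^{2}/6+2^{2n-4}$ is not established, so the proof as proposed has a genuine gap, even though the strategy is the right one.
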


\begin{proof}
\bigskip First, consider the case $s\geq0$. Suppose there exists $\lambda\in
H(n,t,s)$ which is an eigenvalue of $L_{t,\varepsilon}$ for some
$\varepsilon\in\lbrack0,1].$ Then, for the denominator in the expression from
(17), we have the estimate:
\begin{equation}
\left\vert \left(  \lambda-\left(  2\pi k+\pi t\right)  ^{n}-c(2\pi ki+\pi
ti)^{n-1}\right)  \right\vert \geq\left\vert c((2s+1)\pi+\pi t)^{n-1}-c(2\pi
k+\pi t)^{n-1}\right\vert .\tag{18}%
\end{equation}
Using this estimation we prove that
\begin{equation}
\sum_{k\in\mathbb{Z}}\left\vert \left(  \Psi_{\lambda},e^{i\left(  2\pi k+\pi
t\right)  x}\right)  \right\vert ^{2}<1.\tag{19}%
\end{equation}
This contradicts Parseval's equality for the orthonormal basis $\left\{
e^{i\left(  2\pi k+\pi t\right)  x}:k\in\mathbb{Z}\right\}  .$ This means that
$\lambda$ is not an eigenvalue and therefore belong to the resolvent set of
the operators $L_{t,\varepsilon}$. To prove (19), we write the left-hand side
of (19) as the sum of the following four terms: $S_{1}(s),$ $S_{2}(s),$
$S_{2}(s),$ $S_{4}(s),$ and estimate them separately, where
\[
S_{1}(s)=\sum_{k>s}\left\vert \left(  \Psi_{\lambda},e^{i\left(  2\pi k+\pi
t\right)  x}\right)  \right\vert ^{2},\text{ }S_{2}(s)=\sum_{-s\leq k\leq
s,k\neq0}\left\vert \left(  \Psi_{\lambda},e^{i\left(  2\pi k+\pi t\right)
x}\right)  \right\vert ^{2},
\]%
\[
S_{3}(s)=\sum_{k<-s}\left\vert \left(  \Psi_{\lambda},e^{i\left(  2\pi k+\pi
t\right)  x}\right)  \right\vert ^{2},\text{ }S_{4}(s)\left\vert =\left(
\Psi_{\lambda},e^{i\pi tx}\right)  \right\vert ^{2}.
\]
To estimate $S_{1}(s)$ and $S_{2}(s),$ we use the following obvious
inequalities
\begin{equation}
\left\vert a^{n-1}-b^{n-1}\right\vert \geq\left\vert a-b\right\vert \left\vert
a\right\vert ^{n-2},\tag{20}%
\end{equation}
for $ab\geq0$ and for $\left\vert a\right\vert \leq b,$ respectively. If
$k>s\geq0,$ then both $a=:(2\pi k+\pi t)$ and $b:=((2s+1)\pi+\pi t)$ are
positive numbers, if $s\geq0$ and $-s\leq k\leq s,$ then $\left\vert
a\right\vert \leq b.$ Therefore, using (18) and (20), we obtain
\[
\left\vert \left(  \lambda-\left(  2\pi k+\pi t\right)  ^{n}-c(2\pi ki+\pi
ti)^{n-1}\right)  \right\vert \geq\left\vert c\pi(2k-2s-1))(2\pi k+\pi
t)^{n-2}\right\vert .
\]
Substituting this into (17), where $P(k,t)=\left(  2\pi k+\pi t\right)
^{n-2}$ for $k\neq0$ and using the well-known identity
\begin{equation}
\sum_{n=1}^{\infty}\frac{1}{(2n-1)^{2}}=\frac{1}{8}\pi^{2},\tag{21}%
\end{equation}
we obtain the following estimates
\begin{equation}
S_{1}(s)\leq\sum_{k>s}\frac{C^{2}}{c^{2}\pi^{2}\left\vert (2k-2s-1)\right\vert
^{2}}=\frac{C^{2}}{8c^{2}},\text{ }S_{2}(s)<\frac{C^{2}}{8c^{2}}.\tag{22}%
\end{equation}

If $k<0,$ then both $((2s+1)\pi+\pi t)^{n-1}$ and $-(2\pi k+\pi t)^{n-1}$ are
positive numbers for $s\geq0.$ Therefore, from (18), we obtain
\begin{equation}
\left\vert \left(  \lambda-\left(  2\pi k+\pi t\right)  ^{n}-c(2\pi ki+\pi
ti)^{n-1}\right)  \right\vert \geq\left\vert c\right\vert \left\vert (2\pi
k+\pi t)^{n-1}\right\vert . \tag{23}%
\end{equation}
Moreover, $\left\vert 2\pi k+\pi t\right\vert \geq\pi\left(  \left\vert
2k\right\vert -1\right)  $ for $t\in\lbrack0,1].$ Using this inequality (17),
(23) and (21), we obtain
\begin{equation}
S_{3}(s)\leq\sum_{k<-s}\frac{C^{2}}{c^{2}\pi^{2}(\left\vert 2k\right\vert
-1)^{2}}=\frac{C^{2}}{c^{2}\pi^{2}}\left(  \frac{\pi^{2}}{8}-\sum_{-s\leq
k<0}\frac{1}{(\left\vert 2k\right\vert -1)^{2}}\right)  . \tag{24}%
\end{equation}
It remains to estimate $S_{4}(s).$ Using (17), where $P(0,t)=\pi^{n-2},$ and
(18), we obtain
\begin{equation}
S_{4}(s)\leq\frac{\left(  \pi^{n-2}C\right)  ^{2}}{c^{2}\left\vert
((2s+1)\pi+\pi t)^{n-1}-(\pi t)^{n-1}\right\vert ^{2}}. \tag{25}%
\end{equation}
Now, using (22), (24) and (25), we prove that%
\begin{equation}
\sum_{k\in\mathbb{Z}}\left\vert \left(  \Psi_{\lambda},e^{i\left(  2\pi k+\pi
t\right)  x}\right)  \right\vert ^{2}=S_{1}(s)+S_{2}(s)+S_{3}(s)+S_{4}%
(s)<(\frac{1}{4}+\frac{1}{\pi^{2}})\frac{C^{2}}{c^{2}}. \tag{26}%
\end{equation}
for all $s\geq0.$ First we prove (26) for $s=0.$ From (24) and (25) , we
obtain that
\begin{equation}
S_{3}(0)\leq\frac{C^{2}}{8c^{2}},\text{ }S_{4}(0)\leq\frac{\left(  \pi
^{n-2}C\right)  ^{2}}{c^{2}\left\vert (\pi+\pi t)^{n-1}-(\pi t)^{n-1}%
\right\vert ^{2}}\leq\frac{C^{2}}{\pi^{2}c^{2}}, \tag{27}%
\end{equation}
since $(\pi+\pi t)^{n-1}-(\pi t)^{n-1}$ is a nondecreasing function on
$[0,1].$ Moreover, it follows from the definition of $S_{2}(s)$ that
$S_{2}(0)=0.$ Therefore, the inequality in (26), for $s=0,$ follows from (22)
and (27). Now, we prove (26) for $s\geq1.$ It follows from (24) and (25) that
\begin{equation}
S_{3}(s)\leq\frac{C^{2}}{c^{2}\pi^{2}}\left(  \frac{\pi^{2}}{8}-1\right)
,\text{ }S_{4}(s)\leq\frac{\left(  \pi^{n-2}C\right)  ^{2}}{c^{2}\left\vert
(3\pi+\pi t)^{n-1}-(\pi t)^{n-1}\right\vert ^{2}}\leq\frac{1}{9}\frac{C^{2}%
}{\pi^{2}c^{2}} \tag{28}%
\end{equation}
for all $s\geq1$ and $n\geq2,$ since $(3\pi+\pi t)^{n-1}-(\pi t)^{n-1}$ is an
increasing function on $[0,1].$ Instead (27) using (28) and noting that
\[
\frac{1}{\pi^{2}}+\frac{8}{9}\frac{1}{\pi^{2}}>\frac{1}{8},
\]
we see that (26) holds for all $s\geq1.$

Now let us consider the case $s<0.$ If $k<0,$ then both $(2\pi k+\pi t)$ and
$(2s+1)\pi+\pi t$ are nonpositive numbers and we can use (20). Therefore,
arguing as in the proof of (22), we obtain
\begin{equation}
S_{5}(s):=\sum_{k\leq s}\left\vert \left(  \Psi_{\lambda},e^{i\left(  2\pi
k+\pi t\right)  x}\right)  \right\vert ^{2}\leq\frac{C^{2}}{8c^{2}}\text{ ,
}S_{6}(s):=\sum_{s<k<0}\left\vert \left(  \Psi_{\lambda},e^{i\left(  2\pi
k+\pi t\right)  x}\right)  \right\vert ^{2}<\frac{C^{2}}{8c^{2}}.\tag{29}%
\end{equation}
If $k>0,$ then both $((2s+1)\pi+\pi t)^{n-1}$ and $-(2\pi k+\pi t)^{n-1}$ are
nonpositive numbers for $s<0.$ Therefore, from (18) and (17), by using the
inequality $\left\vert 2\pi k+\pi t\right\vert \geq\left\vert 2\pi
k\right\vert $ for $t\in\lbrack0,1]$ and the well-known equality
\[
\sum_{n=1}^{\infty}\frac{1}{(2n)^{2}}=\frac{1}{24}\pi^{2},
\]
we obtain%
\[
\left\vert \left(  \lambda-\left(  2\pi k+\pi t\right)  ^{n}-c(2\pi ki+\pi
ti)^{n-1}\right)  \right\vert \geq\left\vert c(2\pi k+\pi t)^{n-1}\right\vert
\]
and
\begin{equation}
S_{7}(s):=\sum_{k>0}\left\vert \left(  \Psi_{\lambda},e^{i\left(  2\pi k+\pi
t\right)  x}\right)  \right\vert ^{2}\leq\sum_{k>0}\frac{C^{2}}{c^{2}\pi
^{2}(2k)^{2}}=\frac{C^{2}}{24c^{2}}.\tag{30}%
\end{equation}

It remains to estimate $\left\vert \left(  \Psi_{\lambda},e^{i\pi tx}\right)
\right\vert $ for $s<0.$ First consider the case $s=-1.$ Since the expression
$\left\vert (-\pi+\pi t)^{n-1}-(\pi t)^{n-1}\right\vert $ gets its minimum
value at $t=\frac{1}{2},$ we have
\[
\left\vert (-\pi+\pi t)^{n-1}-(\pi t)^{n-1}\right\vert \geq2^{2-n}\pi^{n-1}%
\]
for $s=-1$ and $t\in\lbrack0,1].$ Now, from (18) and (17), where
$P(0,t)=\pi^{n-2},$ we obtain, \
\begin{equation}
\left\vert \left(  \Psi_{\lambda},e^{i\pi tx}\right)  \right\vert ^{2}%
\leq\frac{2^{2n-4}C^{2}}{\pi^{2}c^{2}}. \tag{31}%
\end{equation}
Therefore using (29)-(31) and noting that $S_{6}(-1)=0,$ we see that
\begin{equation}
\sum_{k\in\mathbb{Z}}\left\vert \left(  \Psi_{\lambda},e^{i\left(  2\pi k+\pi
t\right)  x}\right)  \right\vert ^{2}<\left(  \frac{1}{6}+\frac{2^{2n-4}}%
{\pi^{2}}\right)  \frac{C^{2}}{c^{2}} \tag{32}%
\end{equation}
for $s=-1.$

If $s<-1,$ then
\[
\left\vert ((2s+1)\pi+\pi t)^{n-1}-(\pi t)^{n-1}\right\vert \geq\left\vert
(-3\pi+\pi t)^{n-1}-(\pi t)^{n-1}\right\vert \geq(2^{n-1}+1)\pi^{n-1}%
\]
and \
\begin{equation}
\left\vert \left(  \Psi_{\lambda},e^{i\pi tx}\right)  \right\vert ^{2}%
\leq\left(  \frac{1}{(2^{n-1}+1)^{2}\pi^{2}}\right)  \frac{C^{2}}{c^{2}}.
\tag{33}%
\end{equation}
This with (29) and (30) implies that
\begin{equation}
\sum_{k\in\mathbb{Z}}\left\vert \left(  \Psi_{\lambda},e^{i\left(  2\pi k+\pi
t\right)  x}\right)  \right\vert ^{2}<\left(  \frac{7}{24}+\frac{1}%
{(2^{n-1}+1)^{2}\pi^{2}}\right)  \frac{C^{2}}{c^{2}} \tag{34}%
\end{equation}
for $s<-1.$ Thus, by (26), (32) and (34) we have
\[
\sum_{k\in\mathbb{Z}}\left\vert \left(  \Psi_{\lambda},e^{i\left(  2\pi k+\pi
t\right)  x}\right)  \right\vert ^{2}<A\frac{C^{2}}{c^{2}},
\]
for all $\lambda\in H(n,t,s),$ $s\in\mathbb{Z}$ and $t\in\lbrack0,1],$ where
\[
A=\max\left\{  \frac{1}{4}+\frac{1}{\pi^{2}},\frac{1}{6}+\frac{2^{2n-4}}%
{\pi^{2}},\frac{7}{24}+\frac{1}{(2^{n-1}+1)^{2}\pi^{2}}\right\}  =\frac{1}%
{6}+\frac{2^{2n-4}}{\pi^{2}}%
\]
for $n\geq4.$ It means that if (4) holds then (19) is satisfied. The lemma is proved.
\end{proof}

Now, we consider the vertical lines
\[
V(a)=\left\{  (x,y)\in\mathbb{R}^{2}:x=a\right\}
\]
that belong to the resolvent set of the operators $L_{t,\varepsilon}$ for all
$\varepsilon\in\lbrack0,1].$

\begin{lemma}
If $n$ is an even number, then there exists a positive number $M$ such that:

$(a)$ The vertical lines $V(a)$ for $a<-M$ belong to the resolvent set of the
operators $L_{t,\varepsilon}$ for all $t\in(-1,1]$ and $\varepsilon\in
\lbrack0,1]$.

$(b)$ In the cases $\left\vert t\right\vert \in\lbrack0,1/2]$ and $\left\vert
t\right\vert \in(1/2,1],$ respectively, the lines $V(\left(  2\pi
s+\pi\right)  ^{n})$ and $V(\left(  2\pi s\right)  ^{n})$ for $s>M$ belong to
the resolvent set of the operators $L_{t,\varepsilon}$ for all $\varepsilon
\in\lbrack0,1]$.
\end{lemma}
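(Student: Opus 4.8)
The plan is to follow the scheme of Lemma 1: assume for contradiction that some $\lambda$ lying on the line in question is an eigenvalue of $L_{t,\varepsilon}$ for some $\varepsilon\in\lbrack0,1]$, and then deduce $\sum_{k\in\mathbb{Z}}\left\vert(\Psi_{\lambda},e^{i(2\pi k+\pi t)x})\right\vert^{2}<1$, which contradicts Parseval's equality for the orthonormal basis $\{e^{i(2\pi k+\pi t)x}:k\in\mathbb{Z}\}$. The only new ingredient is the lower bound for the denominator of (17). Since $n$ is even, the term $c(2\pi ki+\pi ti)^{n-1}$ is purely imaginary, so $\operatorname{Re}\mu_{k}(t,c)=(2\pi k+\pi t)^{n}$; as $\operatorname{Re}\lambda=a$ on $V(a)$, I would bound the denominator by the difference of real parts,
\[
\left\vert\lambda-(2\pi k+\pi t)^{n}-c(2\pi ki+\pi ti)^{n-1}\right\vert\geq\left\vert a-(2\pi k+\pi t)^{n}\right\vert,
\]
and insert this into (17).

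For part $(a)$ every $(2\pi k+\pi t)^{n}$ is nonnegative while $a<-M<0$, so the right-hand side is at least $\left\vert a\right\vert+(2\pi k+\pi t)^{n}$, exceeding both $M$ and $(2\pi k+\pi t)^{n}$. I would split the resulting series at a cut-off $K$: the tail $\left\vert k\right\vert>K$ is dominated by $\sum_{\left\vert k\right\vert>K}\pi^{-4}\left\vert 2k+t\right\vert^{-4}$, whose tail is small uniformly in $t$ and $a$, while each of the finitely many terms with $\left\vert k\right\vert\leq K$ has denominator at least $M^{2}$ and bounded numerator, hence contributes at most a constant times $M^{-2}$. Fixing $K$ first and then $M$ large makes the total less than $1$.

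For part $(b)$ write $B_{k}=\left\vert 2k+t\right\vert$ and set $A=2s+1$ when $\left\vert t\right\vert\leq1/2$ and $A=2s$ when $\left\vert t\right\vert>1/2$, so the line has real part $(\pi A)^{n}$ and the eigenvalue real parts are $(\pi B_{k})^{n}$. The case split on $\left\vert t\right\vert$ is arranged precisely so that $A$ stays at distance $d_{k}:=\left\vert A-B_{k}\right\vert\geq1/2$ from every $B_{k}$: for $\left\vert t\right\vert\leq1/2$ the $B_{k}$ lie within $1/2$ of the even integers, so the odd integer $2s+1$ is separated, and for $\left\vert t\right\vert>1/2$ the $B_{k}$ lie within $1/2$ of the odd integers, so the even integer $2s$ is separated. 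Using $P(k,t)=\pi^{n-2}B_{k}^{n-2}$ for $k\neq0$ together with the factorization
\[
\left\vert A^{n}-B_{k}^{n}\right\vert=d_{k}\sum_{j=0}^{n-1}A^{n-1-j}B_{k}^{j}\geq d_{k}AB_{k}^{n-2},
\]
valid for all $A,B_{k}\geq0$ since $AB_{k}^{n-2}$ is one of the summands, each squared term of (17) is at most $C^{2}\pi^{-4}d_{k}^{-2}A^{-2}$, so the whole sum is at most $C^{2}\pi^{-4}A^{-2}\sum_{k}d_{k}^{-2}$. Since the $B_{k}$ are spaced roughly $2$ apart and all lie at distance $\geq1/2$ from $A$, the factor $\sum_{k}d_{k}^{-2}$ is bounded by an absolute constant uniformly in $s$ and $t$; because $A\to\infty$ with $s$, the bound tends to $0$ and is less than $1$ for all $s>M$. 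The case $t<0$ reduces to $t>0$ via the symmetry $B_{k}(-t)=B_{-k}(t)$.

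The structural reason why a single threshold $M$ suffices, with no analogue of condition (4), is that comparing the $n$-th powers (real parts) instead of the $(n-1)$-th powers (imaginary parts, as in Lemma 1) produces one extra factor of $A$ in the denominator, converting the constant bound of Lemma 1 into one of order $A^{-2}$. I therefore expect the main obstacles to be the two uniformity claims: the gap $d_{k}\geq1/2$, which must be read off carefully from the $\left\vert t\right\vert$-case split, and the uniform-in-$(s,t)$ summability of $\sum_{k}d_{k}^{-2}$. The exceptional $k=0$ term, where $P(0,t)=\pi^{n-2}$ rather than $\pi^{n-2}B_{0}^{n-2}$, is disposed of separately by the crude estimate $\left\vert A^{n}-B_{0}^{n}\right\vert\geq A^{n}/2$.
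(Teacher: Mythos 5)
Your proposal is correct and follows essentially the same route as the paper: both parts rest on bounding the denominator in (17) from below by the difference of real parts (the paper's inequalities (35) and (38)), splitting the resulting series, and deriving the contradiction with Parseval's equality via (19). The only difference is cosmetic: where the paper simply asserts that $\sum_{k}\left\vert(2\pi k+\pi t)^{n-2}\right\vert^{2}\left\vert(2\pi s+\pi)^{n}-(2\pi k+\pi t)^{n}\right\vert^{-2}$ is small for large $s$, you make this explicit through the factorization $\left\vert A^{n}-B_{k}^{n}\right\vert\geq d_{k}AB_{k}^{n-2}$ and the uniform gap $d_{k}\geq 1/2$, which is a welcome filling-in of a step the paper leaves to the reader.
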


\begin{proof}
$(a)$ Let $\lambda$ be an eigenvalue of $L_{t,\varepsilon}$ for some
$\varepsilon\in\lbrack0,1].$ If $\lambda\in V(a)$ for $a<0$ then it follows
from (17) that
\begin{equation}
\left\vert \left(  \Psi_{\lambda},e^{i\left(  2\pi k+\pi t\right)  x}\right)
\right\vert \leq\frac{C(2\pi k+\pi t)^{n-2}}{\left\vert a\right\vert
+\left\vert \left(  2\pi k+\pi t\right)  ^{n}\right\vert }<\frac{C}{\left(
2\pi k+\pi t\right)  ^{2}} \tag{35}%
\end{equation}
From (35), we obtain that there exists $n_{1}$ such that \
\begin{equation}
\sum_{\left\vert k\right\vert >n_{1}}\left\vert \left(  \Psi_{\lambda
},e^{i\left(  2\pi k+\pi t\right)  x}\right)  \right\vert ^{2}<\frac{1}{2}.
\tag{36}%
\end{equation}
On the other hand, it follows from the first inequality in (35) that there
exists $M$ such that \
\begin{equation}
\sum_{\left\vert k\right\vert \leq n_{1}}\left\vert \left(  \Psi_{\lambda
},e^{i\left(  2\pi k+\pi t\right)  x}\right)  \right\vert ^{2}<\frac{1}{2}
\tag{37}%
\end{equation}
for $\left\vert a\right\vert >M$. Thus, the inequalities (36) and (37) imply
(19), which completes the proof of part $(a)$.

$(b)$ We now prove part $(b)$ for the case $\left\vert t\right\vert \in
\lbrack0,1/2].$ The case $\left\vert t\right\vert \in(1/2,1]$ is similar. If
$\lambda\in V(\left(  2\pi s+\pi\right)  ^{n})$ for $s>0$ and $\left\vert
t\right\vert \in\lbrack0,1/2],$ then we have the inequality
\begin{equation}
\left\vert \left(  \lambda-\left(  2\pi k+\pi t\right)  ^{n}-c(2\pi ki+\pi
ti)^{n-1}\right)  \right\vert \geq\left\vert (\left(  2\pi s+\pi\right)
^{n})-(2\pi k+\pi t)^{n}\right\vert . \tag{38}%
\end{equation}
Since
\[
\left\vert \left(  2\pi s+\pi\right)  -(2\pi k+\pi t)\right\vert \geq\frac
{\pi}{2}%
\]
for all $k\in\mathbb{Z}$ and $\left\vert t\right\vert \in\lbrack0,1/2],$ the
expression
\[
\sum_{k\in\mathbb{Z}}\frac{\left\vert \left(  2\pi k+\pi t\right)
^{n-2}\right\vert ^{2}}{\left\vert (\left(  2\pi s+\pi\right)  ^{n})-(2\pi
k+\pi t)^{n}\right\vert ^{2}}%
\]
is a sufficiently small number for a sufficiently large value of $s.$
Therefore, from (17) and (38), we obtain that (19) holds. This completes the
proof of the lemma.
\end{proof}

Now, we are ready to prove the main result of this section.

\begin{theorem}
If $n$ is an even number and condition (4) holds, then:

$(a)$ All eigenvalues of the operators $L_{t}$ for all $t\in(-1,1]$ are simple.

$(b)$ $L$ is a spectral operator.
\end{theorem}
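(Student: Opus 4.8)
The plan is to assemble Theorem~4 from the pieces already in place, treating part $(a)$ and part $(b)$ in turn. For part $(a)$, the strategy outlined just before Lemma~1 is to show that each eigenvalue $\mu_k(t,c)$ of the unperturbed operator $L_t(c)$ gives rise to exactly one simple eigenvalue of $L_t = L_{t,1}$. The mechanism is a homotopy/continuity argument along the family $L_{t,\varepsilon} = L_t(c) + \varepsilon(L_t - L_t(c))$ for $\varepsilon \in [0,1]$. First I would, for each fixed $k \in \mathbb{Z}$ and $t \in [0,1]$, construct a closed rectangular contour $\gamma_k$ whose four sides consist of two horizontal segments lying on lines $H(n,t,s)$ (with $s$ chosen so that $\mu_k(t,c)$ is trapped between consecutive imaginary levels $ci^{n-2}((2s+1)\pi+\pi t)^{n-1}$) and two vertical segments lying on lines of the form $V(a)$ from Lemma~2. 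By Lemma~1 and Lemma~2, every point of $\gamma_k$ lies in $\rho(L_{t,\varepsilon})$ for all $\varepsilon \in [0,1]$, so the Riesz projection $P_k(\varepsilon) = \frac{1}{2\pi i}\int_{\gamma_k}(L_{t,\varepsilon}-\lambda I)^{-1}\,d\lambda$ is well-defined and depends continuously (indeed analytically) on $\varepsilon$. Its rank is therefore constant in $\varepsilon$; since at $\varepsilon = 0$ the contour encloses exactly the single simple eigenvalue $\mu_k(t,c)$ of $L_t(c)$, the projection has rank one throughout, so $\gamma_k$ encloses exactly one eigenvalue (counting algebraic multiplicity) of $L_t = L_{t,1}$, and that eigenvalue is simple. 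Letting $k$ range over $\mathbb{Z}$ accounts for all eigenvalues of $L_t$, giving simplicity for $t \in [0,1]$; the range $t \in (-1,0)$ follows by the symmetry already used (the estimates were stated for $|t| \in [0,1]$, and $L_{t}$ for negative $t$ is handled identically, or via $L_{t+2}=L_t$).

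The main obstacle in part $(a)$ is the bookkeeping needed to verify that the contours $\gamma_k$ genuinely separate the eigenvalues of $L_t(c)$ and that they can be closed up. The horizontal lines of Lemma~1 separate the imaginary parts of the $\mu_k(t,c)$ (recall from (7) that $\mu_k(t,c) = (2\pi k + \pi t)^n + c(2\pi ki + \pi ti)^{n-1}$, whose imaginary part is governed by the $c\,i^{n-1}$ term), so consecutive levels $H(n,t,s)$ and $H(n,t,s-1)$ isolate one value of $|k|$; the delicate point is matching the index $s$ to $k$ correctly and confirming that no two distinct $\mu_k(t,c)$ share a horizontal strip. The vertical lines from Lemma~2 then cap the contour on the left (via part $(a)$ of Lemma~2, using $V(a)$ with $a < -M$) and on the right (via part $(b)$, using $V((2\pi s+\pi)^n)$ or $V((2\pi s)^n)$ according to the value of $|t|$). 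I expect this index-matching to be the genuinely fiddly step, though it is conceptually routine once the geometry of the grid of eigenvalues in (7) is drawn out.

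For part $(b)$, the plan is to run exactly the argument of Theorem~2, which has already been reduced to two ingredients: Summary~2 and the simplicity of all eigenvalues of $L_t$ for all $t \in (-1,1]$. As noted in the opening paragraph of Section~3, Summary~2 applies in the even-order case precisely because $p_1(x) = c$ is a nonzero real constant, which is the condition of Case~2; and part $(a)$ of this very theorem supplies the required simplicity. With these two facts in hand, I would reproduce the estimate (13), namely the uniform bound $\|e(t,\gamma(t))\| < M$ on all spectral projections obtained by combining (9), (10), and (12), and then invoke the Riesz-basis property of the system of eigenfunctions together with Theorem~3.5 of [3] to conclude that $L$ is a spectral operator. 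No new estimation is needed here; the work is entirely in citing the already-established machinery and observing that the hypotheses of Theorem~2's proof are now verified in the even case. Thus part $(b)$ follows from part $(a)$ and Summary~2 verbatim as in the odd case.
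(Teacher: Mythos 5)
Your overall strategy coincides with the paper's: rectangular contours whose horizontal sides come from Lemma 1 and whose vertical sides come from Lemma 2, a homotopy along $L_{t,\varepsilon}$ with a Riesz projection of constant rank, and, for part $(b)$, a verbatim repetition of the proof of Theorem 2 using Summary 2 together with the simplicity obtained in part $(a)$. Part $(b)$ of your write-up is exactly the paper's argument and needs no further comment.

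There is, however, one genuine gap in part $(a)$: the closing assertion that ``letting $k$ range over $\mathbb{Z}$ accounts for all eigenvalues of $L_{t}$.'' Each of your contours $\gamma_{k}$ is a \emph{bounded} rectangle (capped on the left by some $V(a)$ with $a<-M$ and on the right by $V((2\pi s+\pi)^{n})$ or $V((2\pi s)^{n})$), so the union of these rectangles does not cover the plane, and nothing in your argument excludes an eigenvalue of $L_{t}$ lying inside a horizontal strip but outside the horizontal extent of the corresponding rectangle; such an eigenvalue would escape your count and could a priori fail to be simple. The paper avoids this by reversing the order of quantifiers: it starts from an \emph{arbitrary} eigenvalue $\lambda(t)$ of $L_{t}$, uses the fact that the strips $S(k,t)$ bounded by the lines of Lemma 1 cover $\mathbb{R}^{2}$ to place $\lambda(t)$ in some $S(k,t)$, and only then chooses $a<-M$ and $s>M$ large enough that the rectangle $R(a,s,k,t)$ contains $\lambda(t)$. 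Since enlarging the rectangle within a fixed strip does not admit additional eigenvalues of $L_{t,0}$ (each strip contains exactly one $\mu_{j}(t,c)$, because $j\mapsto(2\pi j+\pi t)^{n-1}$ is strictly monotone when $n-1$ is odd), the homotopy still yields exactly one eigenvalue of $L_{t}$ inside the rectangle, which must therefore be $\lambda(t)$ itself, and it is simple. Your proof becomes correct once you make this adjustment; as written, the coverage step is asserted but not established.
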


\begin{proof}
$(a)$ Let $\lambda(t)$ be arbitrary eigenvalue of $L_{t},$ where $t\in
\lbrack0,1]$ Since the strips
\[
S(k,t):=\left\{  (x,y)\in\mathbb{R}^{2}:b(k,t)\leq y\leq b(k+1,t)\right\}
\]
for $k\in\mathbb{Z},$ cover the plane $\mathbb{R}^{2},$ there exists $k$ such
that $\lambda(t)\in S(k,t),$ where $b(k,t)=c((2k-1)\pi+\pi t)^{n-1}.$
Moreover, there exist constants $a$ and $s$ such that $\lambda(t)$ lies within
the rectangle
\[
R(a,s,k,t)=\left\{  (x,y)\in\mathbb{R}^{2}:a<x<c(s,t),\text{ }b(k,t)\leq y\leq
b(k+1,t)\right\}  ,
\]
where $a<-M$, $s>M,$ and
\[
c(s,t)=\left\{
\begin{array}
[c]{c}%
\left(  2\pi s+\pi\right)  ^{n}\text{ for }\left\vert t\right\vert \in
\lbrack0,1/2]\\
c(s,t)=\left(  2\pi s\right)  ^{n}\text{ for }\left\vert t\right\vert
\in(1/2,1]
\end{array}
\right.
\]
with $M$ as defined in Lemma 2. On the other hand, it follows from Lemmas 1
and 2 that the boundary of the rectangle $R(a,s,k,t)$ belongs to the resolvent
set of the operators $L_{t,\varepsilon}$ for all $\varepsilon\in\lbrack0,1]$.
Since $L_{t,\varepsilon}$ forms a holomorphic family with respect to
$\varepsilon$ and the operator $L_{t,0}$ has exactly one eigenvalue in the
rectangle $R(a,s,k,t)$ for $s>\left\vert k\right\vert $, the operator
$L_{t}=L_{t,1\text{ }}$ must also have exactly one eigenvalue (counting
multiplicity) in this rectangle. This means that the eigenvalue $\lambda(t)$
of $L_{t},$ which lies in this rectangles, is simple. Since $\lambda(t)$ was
chosen as an arbitrary eigenvalue of $L_{t},$ this proves part $(a)$ for
$t\in\lbrack0,1].$ In the same way, we prove part $(a)$ for $t\in(-1,0)$

$(b)$ It follows from the proof of $(a)$ that for each $k\in\mathbb{Z}$ the
strip $S(k,t)$ contains exactly one eigenvalue of $L_{t}$. Denoting the
eigenvalue of $L_{t}$ lying in the strip $S(k,t)$ by $\lambda_{k}(t),$ and
repeating the arguments used in the proof of Theorem 2, we obtain the proof of
part $(b).$
\end{proof}

Note that, by arguing as in the proof (14), we find that (14) remain valid if
$n$ is an even integer greater than $1$ and condition (4) holds.

Now let us consider the case $n=2.$ In this case the operators $L$ and $L_{t}$
are redenoted by $T(c,q)$ and $T_{t}(c,q),$ respectively (see introduction).
In this case (17) and (18) have the following forms
\begin{equation}
\left\vert \left(  \Psi_{\lambda},e^{i\left(  2\pi k+\pi t\right)  x}\right)
\right\vert \leq\frac{\left\Vert q\right\Vert }{\left\vert \lambda-\left(
2\pi k+\pi t\right)  ^{2}-c(2\pi ki+\pi ti)\right\vert }, \tag{39}%
\end{equation}

and
\begin{equation}
\left\vert \left(  \lambda-\left(  2\pi k+\pi t\right)  ^{2}-c(2\pi ki+\pi
ti)\right)  \right\vert \geq\left\vert c\pi\right\vert \left\vert
2s+1-2k\right\vert , \tag{40}%
\end{equation}
respectively, where (40) holds if $\lambda\in H(2,t,s)$ (see Lemma 1 for the
definition of $H(2,t,s)$). Therefore, using (21), we obtain
\[
\sum_{k\in\mathbb{Z}}\left\vert \left(  \Psi_{\lambda},e^{i\left(  2\pi k+\pi
t\right)  x}\right)  \right\vert ^{2}\leq\frac{\left\Vert q\right\Vert ^{2}%
}{\pi^{2}c^{2}}\left(  \sum_{k\leq s}\frac{1}{\left\vert 2s+1-2k\right\vert
^{2}}+\sum_{k>s}\frac{1}{\left\vert 2s+1-2k\right\vert ^{2}}\right)
=\frac{\left\Vert q\right\Vert ^{2}}{4c^{2}}.
\]
This implies that if condition (5) holds, then (19) is satisfied; that is,
$H(2,t,s)$ belong to the resolvent set of the operators $T_{t,\varepsilon}$
for all $\varepsilon\in\lbrack0,1]$ and $t\in\lbrack0,1],$ where
$T_{t,\varepsilon}=T_{t}(c,0)+\varepsilon(T_{t}(c,q)-T_{t}(c,0))$. Therefore,
instead of Lemma 1 using this statement, we obtain the following consequence
of Theorem 3:

\begin{corollary}
If condition (5) holds, then all eigenvalues of the operators $T_{t}(c,q),$
for $t\in\lbrack0,1],$ are simple, and $T(c,q)$ is a spectral operator.
\end{corollary}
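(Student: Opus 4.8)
The plan is to prove Corollary~1 by specializing the machinery already developed for the even-order case ($n>2$) to the Schr\"odinger setting $n=2$, replacing the technical Lemma~1 by the much simpler one-line computation carried out immediately above the corollary statement. The key observation is that the entire proof of Theorem~3 is driven by two ingredients: first, that a suitable family of horizontal and vertical lines lies in the resolvent set of $T_{t,\varepsilon}$ for all $\varepsilon\in[0,1]$, which confines eigenvalues to bounded rectangles; and second, a homotopy/continuity argument in $\varepsilon$ that transfers the known simplicity of the eigenvalues of the unperturbed operator $T_{t}(c,0)$ to the full operator $T_{t}(c,q)$.

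First I would establish the analogue of Lemma~1. Using the estimates (39) and (40) together with the series identity (21), the displayed computation shows that whenever $\lambda\in H(2,t,s)$ one has $\sum_{k\in\mathbb{Z}}|(\Psi_{\lambda},e^{i(2\pi k+\pi t)x})|^{2}\leq \|q\|^{2}/(4c^{2})$. Condition (5), namely $|c|>\tfrac12\|q\|$, forces this sum to be strictly less than $1$, which contradicts Parseval's equality for the orthonormal basis $\{e^{i(2\pi k+\pi t)x}:k\in\mathbb{Z}\}$ exactly as in the proof of Lemma~1. Hence no $\lambda\in H(2,t,s)$ can be an eigenvalue of $T_{t,\varepsilon}$, so every horizontal line $H(2,t,s)$ lies in the resolvent set, for all $\varepsilon\in[0,1]$, all $s\in\mathbb{Z}$, and all $t\in[0,1]$. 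This is the substitute for Lemma~1 in the case $n=2$, and it is noticeably cleaner because there are no integration-by-parts terms to control beyond the single factor $\|q\|$.

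Next I would invoke Lemma~2 directly: since Lemma~2 is stated for all even $n$ (its proof uses only the crude bound (35) and large-$s$ smallness of the relevant series), it applies verbatim with $n=2$, providing the vertical lines $V(a)$ for $a<-M$ and $V((2\pi s+\pi)^{2})$ or $V((2\pi s)^{2})$ for $s>M$ in the resolvent set of $T_{t,\varepsilon}$. With the horizontal lines $H(2,t,s)$ from the previous step and these vertical lines, the boundary of each rectangle $R(a,s,k,t)$ lies in $\rho(T_{t,\varepsilon})$ for all $\varepsilon\in[0,1]$. The holomorphic-family argument of Theorem~3$(a)$ then gives that each such rectangle contains exactly one eigenvalue (counting multiplicity) of $T_{t,0}=T_{t}(c,0)$, whose eigenvalues $\mu_{k}(t,c)=(2\pi k+\pi t)^{2}+c(2\pi k+\pi t)i$ are visibly simple, and therefore exactly one eigenvalue of $T_{t}(c,q)=T_{t,1}$, which is consequently simple. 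This proves simplicity of all eigenvalues for $t\in[0,1]$, and the reflection argument used for $t\in(-1,0)$ in Theorem~3 applies unchanged.

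Finally, spectrality follows exactly as in part $(b)$ of Theorem~3 and the proof of Theorem~2: with all eigenvalues simple, the strips $S(k,t)$ each isolate a single Bloch eigenvalue $\lambda_{k}(t)$, Summary~2 applies (since $p_{1}=c$ is a nonzero real constant, Case~2 holds), and the uniform bound (13) on the spectral projections $e(t,\gamma(t))$ combined with the Riesz-basis property yields spectrality via Theorem~3.5 of [3]. I do not expect a genuine obstacle here: the only nontrivial step is verifying that the simplified estimate honestly delivers the strict inequality (19) under hypothesis (5), and that is already done in the displayed line preceding the corollary. The main thing to be careful about is confirming that Lemma~2's proof truly contains no hidden dependence on $n>2$, but inspection of its argument---which rests only on (35) and a convergence-of-series smallness claim valid for every $n\geq2$---shows it does not.
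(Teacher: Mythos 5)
Your proposal is correct and follows essentially the same route as the paper: the displayed computation using (39), (40), and (21) replaces Lemma 1 for $n=2$, Lemma 2 applies as stated for all even $n$, and the rectangle/homotopy argument of Theorem 3 together with Summary 2 and Theorem 3.5 of [3] yields simplicity and spectrality. The paper's own proof is exactly this observation, stated more tersely.
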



\begin{thebibliography}{99}                                                                                               %


\bibitem {}N. Dunford and J. T Schwartz, Linear Operators, Part III: Spectral
Operators. Wiley-- Interscience, New York, 1988.

\bibitem {}F. Gesztesy and V. Tkachenko, A criterion for Hill operators to be
spectral operators of scalar type, J. Analyse Math\textit{.} \textbf{107},
287--353 (2009).

\bibitem {}D. C. McGarvey, Differential operators with periodic coefficients
in $L_{p}(-\infty,\infty)$, J. Math. Anal. Appl., \textbf{11}, 564-596 (1965).

\bibitem {}D. C. McGarvey, Perturbation results for periodic differential
operators, J. Math. Anal. Appl., \textbf{12} 187-234 (1965).

\bibitem {}M. A. Naimark, Linear Differential Operators, Part 1, George G.
Harrap, London, 1967.

\bibitem {}F. S. Rofe-Beketov, The spectrum of nonself-adjoint differential
operators with periodic coefficients. Sov. Math. Dokl. {4}, 1563-1564 (1963).

\bibitem {}O. A. Veliev, Asymptotic analysis of non-self-adjoint Hill
operators, Central European Journal of Mathematics, \textbf{11 (}12),
2234-2256 (2013).

\bibitem {}O. A. Veliev, On the Spectral Singularities and Spectrality of the
Hill Operator, Operators and Matrices, 10 (1), 57-71 (2016).

\bibitem {}O. A. Veliev, Asymptotically Spectral Periodic Differential
Operator, Math. Notes, 104, 364--376 (2018).

\bibitem {}O. A. Veliev, Spectral expansion series with parenthesis for the
nonself-adjoint periodic differential operators, Comm. Pure and Appl. Anal.,
\textbf{18}, 397-424 (2019).

\bibitem {}O. A. Veliev, On the spectrality and spectral expansion of the
non-self-adjoint Mathieu-Hill operator, Comm. Pure and Appl. Anal.,
\textbf{19}, 1537-1562 (2020).

\bibitem {}O. A. Veliev, Non-self-adjoint Schr\"{o}dinger Operator with a
Periodic Potential, Springer, Switzerland, 2021.

\bibitem {}O. A. Veliev, On the spectrum of the differential operators of odd
order with PT-symmetric coefficients, Funktsional'nyi Analiz i ego
Prilozheniya \textbf{59} (2), 17-24 (2025).

\bibitem {}O. A. Veliev, Spectral Analysis of the Schrodinger Operator with a
PT-Symmetric Periodic Optical Potential, J. Math. Phys. 61, 063508 (2020).
\end{thebibliography}
\end{document}